\documentclass[12pt,draft]{article}
\usepackage{amssymb,amsmath,amsthm}
\vfuzz4pt \textwidth 164mm %\textheight22.5cm
% THEOREMS -------------------------------------------------------
\newtheorem{theorem}{Theorem}%[section]

\newtheorem{lemma}{Lemma}

\theoremstyle{definition}
\newtheorem{definition}{Definition}
\theoremstyle{remark}
\newtheorem{remark}{Remark}

% ---------------------------------------------------
\begin{document}
\title{Wa\'zewski Topological  Principle\\
and V-bounded Solutions of Nonlinear Systems}
\author{Volodymyr Lagoda\thanks{National Taras Shevchenko Univesity of
Kyiv, Volodymyrs'ka 64, Kyiv, 01033, Ukraine},\and  Igor
Parasyuk\thanks{ibid.}}
 \maketitle
\abstract{We use the Wa\'zewski topological principle to establish a number of
new sufficient conditions for the existence of proper (defined on the entire
time axis) solutions of essentially nonlinear nonautonomous systems. The
systems under consideration are characterized by the monotonicity property with
respect to a certain auxiliary  guiding function $W(t,x)$ depending on time and
phase coordinates. Another auxiliary function $V(t,x)$, which is positively
defined in the phase variables $x$ for any $t$, is used to estimate the
deviation of the  proper solutions from the origin.}

\section{Introduction}

 The goal of this paper is to lay down sufficient conditions
 under which the nonlinear nonautono\-mous system of ODEs
\begin{equation}\label{eq:nlsys} \dot{x}=f(t,x)
\end{equation}
where $f:\Omega  \mapsto \mathbb R^n$ ($\Omega\subset \mathbb R^{1+n}$)  has a solution $x(t)$
extendable on the entire time axis and  possessing the property that a given
positively definite (with respect to $x$-variables) function $V(t,x)$ is
bounded along the graph of $x(t)$. We especially focus on getting estimates for
the function $V(t,x(t))$. The main results are obtained by using the Wa\'zewski
topological principle \cite{Waz47,Har70,Con76,Con78}, and some of them
generalize the results of V.~M.~Cheresiz \cite{Che74}.

It should be noted that the Wa\'zewski topological principle was successfully
explo\-it\-ed for proving the existence of bounded solutions to some boundary
value problems in \cite{Con75} and to  quasihomogeneous systems in
\cite{Iva85a,Iva85b} (see also a discussion in \cite{Ort08}).

To apply the the Wa\'zewski principle, along with the function $V$ which can be
naturally considered as an analogue of time-dependent norm, we use another
auxiliary function $W(t,x)$. In general case, this function is a sign-changing
one, but it must have positively definite derivative by virtue of the system
\eqref{eq:nlsys} in the domain where $V\ge v_0$ for some constant $v_0>0$. We
call $V$ and $W$ \emph{the estimating function} and \emph{the guiding function}
respectively and we say that together they form the V--W-pair of the system.
Note that the term ''guiding function'' we borrow from \cite{KraZab75}
(originally --- ''guiding potential''). Basically topological method of guiding
functions, which was developed by M.~A.~Krasnosel'ski and A.~I.~Perov, is an
effective tool for proving the existence of bounded solutions of essentially
nonlinear systems too (see the bibliography in \cite{KraZab75, MawWar02}). But,
except \cite{Ort08,Avr03}, in all papers known to us only independent of time
guiding functions were used.

In \cite{Che74}, the role of V--W-pair plays  the square of Euclidean norm
together with an indefinite nondegenerate quadratic form. It appears that in
this case sufficient conditions for the existence of bounded solutions as well
as the estimates of their norms coincide with those obtained by means of
technique developed in \cite{PerTru74, PerTru86} for indefinitely monotone (not
necessarily finite dimensional) systems.

We shall not mention here another interesting approaches in studying the
existence problem of bounded solutions to nonlinear systems, because they have
not been used in this paper. For the corresponding information the reader is
referred to
\cite{War94,SBB02,Blo88,BerZha95,BCM97,ZakPar99,Cie03,CheMam05,Her06,
ARS06,Sly02,Gro06,Kar07}.

This paper is organized as follows. Section~\ref{defVWpair} contains necessary
definitions, in particular, the notion of V--W-pair is introduced and some
additional conditions imposed on estimating and guiding functions are
described. In section~\ref{ExistUniqVBS} we prove two main theorems concerning
the existence and the uniqueness of V-bounded solution to a nonlinear
nonautonomous system possessing V--W-pair. Finally, in
section~\ref{PairQuadForms} we show how the results of
section~\ref{ExistUniqVBS} can be applied in the case where the estimating and
guiding functions are nonautonomous quadratic forms. In this connection it
should be pointed out that guiding quadratic forms play an important role in
the theory of linear dichotomous systems with (integrally) bounded coefficients
\cite{DalKre70,MSK03,Sam02}.

\section{The definition of V--W-pair
 and the main\\ assumptions} \label{defVWpair}

Let  $\Omega$ be a domain of $\mathbb R^{1+n}=\{t\in \mathbb R\}\times \{x\in \mathbb R^{n}\}$ such that
the projection of $\Omega $ on the time axis $\{t\in \mathbb R\}$ covers all this axis,
and let in the system \eqref{eq:nlsys}  $f(\cdot)\in C(\Omega \mapsto \mathbb R^n)$. It will be
always assumed that each solution of the system has the uniqueness property.

\begin{definition}A function $V(\cdot)\in  C^1(\mathbb R\times \mathbb R^n\! \mapsto\! \mathbb R_+)$
of variables $t\in \mathbb R,\;x\in \mathbb R^{n}$ will be called the estimating function, if
for any $t\in \mathbb R$ the function $V_t(\cdot):=V(t,\cdot):\mathbb R^n \mapsto \mathbb R_+$ is positively
definite, has a unique critical point, the origin, and satisfies the condition
$\lim_{\|x\|\to \infty}V_t(x)= \infty$.
\end{definition}

Note that, as is well known, for any $t\in \mathbb R$ and each $c>0$ the set
$V_t^{-1}([0,c]):=\{x\in \mathbb R^{n}:0\le V_t(x)\le c\}$ is compact, its boundary is
a closed connected hypersurface $V_{t}^{-1}(c)$ surrounding the origin, and in
addition, if  $c_2>c_1$, then the set $V_t^{-1}([0,c_1])$ is a proper subset of
the set $V_t^{-1}([0,c_2])$.

\begin{definition}A global solution $x(t),\;t\in I$ of the system \eqref{eq:nlsys}
is said to be V-bounded if  $\sup_{t\in I}V(t,x(t))<\infty $.
\end{definition}

For  $U(\cdot)\in C^1(\Omega  \mapsto \mathbb R)$ we put $$\dot{U}_{f}:=\frac{\partial V}{\partial t}+\frac{\partial
V}{\partial x}\cdot f.$$

\begin{definition}For the system \eqref{eq:nlsys}, a function $W(\cdot)\in C^1(\Omega \! \mapsto\! \mathbb R)$
will be called the guiding function concordant with $V$ if for some $v_0>0$
such that $\Omega \cap V^{-1}\bigr([v_0,\infty)\bigl)\ne \varnothing$ there exist
functions
\begin{gather*}
  a(\cdot)\in C\bigl(\Omega \cap V^{-1}\bigl([v_0,\infty)\bigr)\! \mapsto\!(0,\infty)\bigr),\\
G(\cdot)\in C([v_0,\infty)\! \mapsto\!(0,\infty)), \quad g(\cdot)\in C([v_0,\infty)\! \mapsto\! (0,\infty))
\end{gather*}
satisfying the inequalities
\begin{gather}
  \label{eq:imcond1}
\bigl|\dot{V}_{f}(t,x)\bigr|  \le a(t,x)G(V(t,x))\quad \forall(t,x)\in \Omega \cap
V^{-1}\bigl([v_0,\infty)\bigr),\\ \label{eq:imcond2}
 \dot{W}_{f}(t,x) \ge a(t,x)g(V(t,x))\quad \forall(t,x)\in
 \Omega \cap V^{-1}([v_0,\infty)),\\ \label{eq:imcond3}
g(v)\ge g(v_0)>0\quad \forall v\ge v_0.
\end{gather}
\end{definition}

\begin{definition}For the system \eqref{eq:nlsys}, the estimating function $V$
and the concordant  guiding function $W$
 will be called the V--W-pair of this system.
\end{definition}

Define
\begin{gather*}
  F(v):=\int_{v_0}^{v}\big(g(u)/G(u)\big)\,du.
\end{gather*}
On the half-line $v\ge v_0$, this function is monotonically increasing and has
the inverse $F^{-1}(\cdot):[0,\infty) \mapsto[v_0,\infty)$.

Denote by $\Pi_t:=\{t\}\times \mathbb R^{n}$ the ''vertical'' hyperplane in $\mathbb R^{1+n}$,
and in so far suppose that the system \eqref{eq:nlsys} has V--W-pair which
satisfies the following additional conditions:
\newlength{\ppp}\setlength{\ppp}{\textwidth}
\addtolength{\ppp}{-6.\parindent}

\medskip

 \noindent(A):\quad \parbox[t]{\ppp}{$\lim_{v\to \infty}F(v)=\infty $;}

 \noindent(B):\quad
 \parbox[t]{\ppp}{$\int_{-\infty}^{0}\alpha (s)\,ds=\int_{0}^{\infty}\alpha (s)\,ds=\infty$,
 where
$$
  \alpha(t):=\inf \left\{a(t,x): x\in \Omega_t, \;V_t(x)>v_0\right\};
$$}

 \noindent(C):\quad \parbox[t]{\ppp}{there exist numbers
 $w^+,\;w_-$ ($w^+>w_-$) such that
 $$V^{-1}\bigl([0,v_0)\bigr)\subset W^{-1}\bigl((w_-,w^+)\bigr),\quad
V^{-1}(v_0)\subset \Omega,$$ and in addition, for any $t\in \mathbb R$ the number $w^+$
belongs to the range of  $W_{t}(\cdot):=W(t,\cdot):\Omega_t \mapsto \mathbb R$ where
$\Omega_{t}:=\Pi_t\cap \Omega $.}

\noindent(D):\quad
\parbox[t]{\ppp}{the domain $\mathcal{W}$, which is defined as such a connected component of the set
$W^{-1}\bigl(w_-,w^+)$ that contains $V^{-1}\bigl([0,v_0)\bigr)$, has the
property:  for any sufficiently large by absolute value negative  $t$ there
exists a set $\mathcal{M}_{t}\subset \mathcal{W}_t\cup[\partial \mathcal{W}_t\cap W_t^{-1}(w^+)]$, where
$\mathcal{W}_t:=\mathcal{W}\cap \Pi_t$, such  that the set  $ \mathcal{M}_{t}\cap \partial \mathcal{W}_t\cap
W_t^{-1}(w^+)\ne\varnothing$ is a retract of $\cup_{s\ge t}\partial \mathcal{W}_s\cap
W_s^{-1}(w^+)$, but is not a retract of $\mathcal{M}_t$, and, besides,
\begin{gather*}
 \liminf_{t\to -\infty}\sup\{V_t(x):x\in
\mathcal{M}_t\}=\nu <\infty.
\end{gather*}}

\begin{remark}\label{rem:1}If $V^{-1}\bigl([0,v_0]\bigr)
\subset \Omega $ and $W(V^{-1}(v_0))\in [w_-,w^+]$, then one can redefine the
guiding function in the domain $V^{-1}\bigl([0,v_0)\bigr)$ in such a way that
$V^{-1}\bigl([0,v_0)\bigr)\subset W^{-1}\bigl((w_-,w^+)\bigr)$.
\end{remark}

\begin{remark} The condition (D) is fulfilled if for any negative
sufficiently large by abso\-lute value  $t$ there exists a finite collection
$\{\mathcal{M}_{t,j}\}$ of compact manifolds with border such that:  $\partial
\mathcal{M}_{t,j}\cap \partial \mathcal{M}_{t,k}=\varnothing $, $j\ne k$; the interior of
$\mathcal{M}_{t,j}$ belongs to $\mathcal{W}_t$; the set $\cup_{j\ge 1}\partial \mathcal{M}_{t,j}$ is a
retract of $\cup_{s\ge t}\partial \mathcal{W}_s\cap W_s^{-1}(w^+)$ and
\begin{gather*}
  \liminf_{t\to -\infty}\max\{V_t(x):x\in
\cup_{j\ge 1}\mathcal{M}_{t,j}\}=\nu <\infty.
\end{gather*}
In fact, in this case, taking into account that any compact manifold can not be
retracted to its border,  it is sufficient to put $\mathcal{M}_t:=\cup_{j\ge
1}\mathcal{M}_{t,j}$.
\end{remark}

\begin{remark}\label{rem:(D)} Since the set
$\mathcal{M}_{t}\cap W_t^{-1}(w^+)$ is not empty and in any point of this set the
function $V$ takes values not less than  $v_0$,  we get the inequality $\nu \ge
v_0$.\end{remark}

\section{The existence and the uniqueness\\
 of V-bounded solution}\label{ExistUniqVBS}

The lemma given below open the door to estimation of solutions of the system
\eqref{eq:nlsys} by means of functions $V$ in the presence of V--W-pair.

\begin{lemma}\label{lem:estimate}
Suppose that the system \eqref{eq:nlsys} has V--W-pair satisfying the condition
(A). Let this system has a global solution $x(t)$, $t\in I\subseteq \mathbb R$,  such
that
\begin{gather*}
  W^*:=\sup_{t\in J} W(t,x(t))<\infty,\quad
  W_*:=\inf_{t\in J} W(t,x(t))>-\infty
\end{gather*}
äå $J:=\{t\in I:V(t,x(t))> v_0\}$.

Then for any $t_0\in I$, in the case where  $V(t_0,x(t_0))\le v_0$ and
$J\ne\varnothing$, the following inequality holds true
\begin{gather}\label{eq:estonI1}
  V(t,x(t))\le F^{-1}\big(W^*-W_0\big)\quad \forall t\in I\cap[t_0,\infty)
\end{gather}
where
\begin{gather*}
  W_0=\inf \left\{W(t,x(t)):\;t\in I,\;V(t,x(t))=v_0\right\}.
\end{gather*}

If $V(t_0,x(t_0))> v_0$, then in the case where $V(t,x(t))>v_0$ for all $t\in
I\cap [t_0,\infty)$, we have
\begin{gather}\label{eq:estonI2}
  V(t,x(t))\le F^{-1}\big(F\big(V(t_0,x(t_0))\big)+W^*-W(t_0,x(t_0))\big),
\end{gather}
and otherwise
\begin{gather}\label{eq:estonI3}
  V(t,x(t))\le
    \max \left\{F^{-1}\big(F\big(V(t_0,x(t_0))\big)+{W}^0-W(t_0,x(t_0))\big),
     F^{-1}\big(W^*-W_0\big)\right\},
     \end{gather}
where
\begin{gather*}
  W^0=\sup \left\{W(t,x(t)):\;t\in I,\;V(t,x(t))=v_0\right\}.
\end{gather*}
In addition, if $[t_*,t^*]\subset I$ is such a segment that $(t_*,t^*)\subset
J$ and $$V\big(t_*,x(t_*)\big)=V\big(t^*,x(t^*)\big)=v_0,$$ then
\begin{gather}\label{eq:estV}
  V(t,x(t))\le F^{-1}\left(\frac{1}{2}\Big[W\big(t^*,x(t^*)\big)-W\big(t_*,x(t_*)\big)\Big]\!\right)
  \quad \forall t\in[t_*,t^*].
\end{gather}

If the condition (B) is fulfilled and $[t_0,\theta]\subset I$, where  $\theta $ is
determined by the equality
\begin{gather} \label{eq:esttau}
  \int_{t_0}^{\theta}\alpha (s)\,ds=(W^*-W_*)/g(v_0),
\end{gather}
then there exists  $\tau \in  [t_0,\theta]$ for which $V(\tau,x(\tau))\le v_0$.
\end{lemma}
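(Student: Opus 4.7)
The plan is to extract, from (\ref{eq:imcond1})--(\ref{eq:imcond2}), two comparison quantities along the solution: $\psi(t) := F(V(t,x(t))) - W(t,x(t))$ and $\phi(t) := F(V(t,x(t))) + W(t,x(t))$. Differentiating $F \circ V$ along the flow gives $\frac{d}{dt}F(V(t,x(t))) = \frac{g(V)}{G(V)}\dot{V}_f$, so on $J := \{t \in I : V(t,x(t)) > v_0\}$ we have
\[
\left|\frac{d}{dt}F(V(t,x(t)))\right| \le a(t,x(t))\,g(V(t,x(t))) \le \dot{W}_f(t,x(t)),
\]
and therefore $\dot\psi \le 0 \le \dot\phi$ on $J$. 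Since $F(v_0)=0$, at any crossing time $s$ with $V(s,x(s))=v_0$ we have $\psi(s)=-W(s,x(s))$ and $\phi(s)=W(s,x(s))$.

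The three estimates (\ref{eq:estonI1})--(\ref{eq:estonI3}) then follow by examining the connected components of $J \cap [t_0,\infty)$. For (\ref{eq:estonI1}), given $t \in J$ with $t \ge t_0$, continuity supplies a latest crossing $s \in [t_0,t]$; non-increasingness of $\psi$ on $[s,t]$ together with $W(s,x(s)) \ge W_0$ and $W(t,x(t)) \le W^*$ gives $F(V(t,x(t))) \le W^* - W_0$. For (\ref{eq:estonI2}), $\psi$ is non-increasing on all of $[t_0,\infty) \cap I$, so $\psi(t) \le \psi(t_0)$ with $W(t,x(t)) \le W^*$ yields the bound directly. For (\ref{eq:estonI3}), split at $t_1 := \inf\{s \ge t_0 : V(s,x(s)) = v_0\}$: on $[t_0,t_1]$, non-decreasingness of $\phi$ gives $\phi(t) \le \phi(t_1) = W(t_1,x(t_1)) \le W^0$, hence $W(t,x(t)) \le W^0$; substituting into $F(V(t,x(t))) \le F(V(t_0,x(t_0))) - W(t_0,x(t_0)) + W(t,x(t))$ produces the first argument of the $\max$, while on $[t_1,\infty) \cap I$ the argument for (\ref{eq:estonI1}) produces the second.

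Estimate (\ref{eq:estV}) is the cleanest expression of the two-sided monotonicity: on $(t_*,t^*) \subset J$ one has simultaneously $\psi(t) \le -W(t_*,x(t_*))$ and $\phi(t) \le W(t^*,x(t^*))$; adding eliminates $W(t,x(t))$ and leaves $2 F(V(t,x(t))) \le W(t^*,x(t^*)) - W(t_*,x(t_*))$, which is (\ref{eq:estV}) upon applying $F^{-1}$. For the last assertion, suppose for contradiction that $V(t,x(t)) > v_0$ throughout $[t_0,\theta]$. Then (\ref{eq:imcond2})--(\ref{eq:imcond3}) give $\dot{W}_f(t,x(t)) \ge a(t,x(t))g(v_0) \ge \alpha(t)g(v_0)$; integrating over $[t_0,\theta]$ and using (\ref{eq:esttau}) yields $W(\theta,x(\theta)) - W(t_0,x(t_0)) \ge W^* - W_*$. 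Combined with $W_* \le W(t_0,x(t_0))$ and $W(\theta,x(\theta)) \le W^*$ this forces equality in both inequalities, and continuity of $V$ along the solution lets us extend the interval slightly, producing a strict inequality incompatible with $W_* \le W \le W^*$.

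The hardest step is the $W^0$-refinement in (\ref{eq:estonI3}): obtaining $W^0$ rather than the weaker $W^*$ in the first argument of the $\max$ requires actively invoking $\phi$-monotonicity on the initial component $[t_0,t_1]$ to transport the crossing-point bound $W(t_1,x(t_1)) \le W^0$ backwards to all earlier times in that component. This two-function bootstrap---the same device that yields the factor $1/2$ in (\ref{eq:estV})---is the structural trick binding $\psi$ and $\phi$ into a single unified estimation scheme.
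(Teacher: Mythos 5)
Your proof is correct and follows essentially the same route as the paper: both rest on the single differential inequality $\bigl|\tfrac{d}{dt}F(v(t))\bigr|\le\tfrac{d}{dt}W(t,x(t))$ on $J$, followed by a case analysis at the crossing times $v=v_0$ where $F$ vanishes. Your packaging via the monotone pair $\psi=F\circ v-W$, $\phi=F\circ v+W$ is only a cosmetic reorganization (e.g.\ for \eqref{eq:estV} you add the two monotonicity bounds where the paper splits the integral at the maximum point of $v$), and your closing argument for the last assertion, including the slight extension past $\theta$, mirrors the paper's.
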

\begin{proof} Let the condition (A) is fulfilled.
Throughout this proof, put $v(t):=V(t,x(t))$. Then in view of
\eqref{eq:imcond1}, \eqref{eq:imcond2} we have
\begin{gather}\label{eq:vW}
 \left|\frac{d}{dt}F(v(t))\right|=  \frac{g(v(t))|\dot{v}(t)|}{G(v(t))}\le
 \frac{d}{dt}W(t,x(t))\quad \forall t\in J.
\end{gather}

If $v(t_0)\le v_0$ and $J\ne\varnothing$, then there exists an interval
$(t_*,T)\subset J\cap [t_0,\infty)$ such that  $v(t_*)=v_0$. Since $F(v(t_*))=0$,
then, as a consequence of \eqref{eq:vW} and inequality $W(t_*,x(t_*))\ge W_0$,
we have
\begin{gather*}
  F(v(t))\le W(t,x(t))-W(t_*,x(t_*))\le W^*-W_0 \quad
  \forall t\in [t_*,T],
\end{gather*}
and from this it follows that $v(t)\le F^{-1}(W^*-W_0)$ for all $t\in [t_*,T]$.
Taking into account that $v_0=F^{-1}(0)\le F^{-1}(W^*-W_0)$ and the function
$F^{-1}(\cdot)$ is monotonically increasing, one ascertains that \eqref{eq:estonI1}
is true for all $t\in [t_0,\infty)\cap I$.

If now  $v(t_0)>v_0$, then until $v(t)>v_0$ we have
\begin{gather*}
  F(v(t))-F(v(t_0))\le W(t,x(t))-W(t_0,x(t_0)).
\end{gather*}
In the case where $v(t)>v_0$ for all $t\in [t_0,\infty)\cap I$, we obtain the
inequality \eqref{eq:estonI2}. Otherwise there exists the nearest to $t_0$
moment $t_*>t_0$ such that $v(t_*)=0$. Then $W(t_*,x(t_*))\le W^0$, and on the
segment $[t_0,t_*]$, we arrive at
\begin{gather*}
  F(v(t))-F(v(t_0))\le
W^0-W(t_0,x(t_0))\quad \Leftrightarrow\\ v(t)\le
F^{-1}\big(F(v(t_0))+W^0-W(t_0,x(t_0))\big).
\end{gather*}
 Taking into account the estimate
obtained above for $v(t)$ on the segment $[t_*,T]$,  one ascertains that the
inequality \eqref{eq:estonI3} holds true.

Now let us estimate $v(t)$ on $[t_*,t^*]\subset I$ under the condition that
$(t_*,t^*)\subset J$ and $v(t_*)=v(t^*)=v_0$. Let $\hat{t}$ be a point at which
$v(t)$ reaches its maximum on $[t_*,t^*]$. Then from the inequality
\eqref{eq:vW} it follows that
\begin{gather*}
 W(t^*,x(t^*))-W(t_*,x(t_*))\ge
 \int_{t_*}^{\hat{t}}\frac{g(v(t))|\dot{v}(t)|}{G(v(t))}\,dt+
\int_{\hat{t}}^{t^*}\frac{g(v(t))|\dot{v}(t)|}{G(v(t))}\,dt\ge \\
2F(v(\hat{t}))-F(v(t_*))-F(v(t^*))=2F(v(\hat{t}))\ge 2F(v(t))\quad \forall t\in
[t_*,t^*],
\end{gather*}
and  we obtain the inequality \eqref{eq:estV}.

Next, let the condition (B) is fulfilled and $[t_0,\theta]\subset I$. Let us prove
that there exists a number  $\tau $ belonging to $[t_0,\theta]$ for which $v(\tau)\le
v_0$. Obviously, it is sufficient to consider the case where $v(t_0)>v_0$. If
we suppose the contrary, i.e. that $v(t)>v_0$ for all $t\in [t_0,\theta]$, then we
can find such a small $\epsilon>0$ that the inequality $v(t)>v_0$ and thus the
inequality $\frac{d}{dt}W(t,x(t))\ge \alpha (t)g(v_0)>0$ holds true for all $t\in
[t_0,\theta +\epsilon]\subset J$. From this in virtue of ~\eqref{eq:imcond3} we arrive
at inequality $$W^*\ge W(\theta +\epsilon,x(\theta +\epsilon))> g(v_0)\int_{t_0}^{\theta}\alpha (s)\,ds
+ W(t_0,x(t_0))\ge g(v_0)\int_{t_0}^{\theta}\alpha (s)\,ds+W_*$$ which contradicts the
definition of $\theta$. Hence, there do exists a number $\tau \in [t_0,\theta]$ with
the required property.
\end{proof}

\begin{remark}\label{rem:4}
If it is impossible to find $F^{-1}(\cdot)$ explicitly, then in order to obtain
efficient estimates of solutions one can replace the function $F(v)$ by another
appropriate strictly monotonic function $F_1(v)$, which satisfies the
inequality $F_1(v)\le F(v)$ for all $v>v_0$ and tends to infinity when $v\to \infty
$.
\end{remark}

Put
\begin{gather*}
  w^0(t):=\max\left\{W_t(x):x\in V_t^{-1}(v_0)\right\},\\
  w_0(t):=\min\left\{ W_t(x):x\in V_t^{-1}(v_0)\right\},\\
   \omega_0:=\liminf_{t\to -\infty}w_0(t),\\
  \tilde{\omega}:=\liminf_{t\to - \infty}\bigl(\inf\{W_t(x):x\in \mathcal{M}_t,\;V_t(x)\ge
  v_0\}\bigr).
\end{gather*}
It is clear that the inequalities   $$w_-\le \omega_0\le w^+,\quad w_-\le\tilde{\omega}\le
w^+$$ holds true once the condition (D) is satisfied.

Now we are in position to prove the following statement.

\begin{theorem}\label{th:exbs} Assume that the system \eqref{eq:nlsys}
has V--W-pair satisfying the conditions (A)-(D). Let there exists a number
$V^*$ such that
\begin{equation}\label{eq:V*>}
  V^*>\max\bigl\{F^{-1}\bigl(F(\nu)+w^+-\tilde{\omega}\bigr),F^{-1}(w^+-
\omega_0)\bigr\},
\end{equation}
and the set $\mathrm{cls}\left(V^{-1}\bigl([0,V^*)\bigr)\cap\mathcal{W}\right)$ (here $\mathrm{cls}$
means the closure operation) belongs to the domain $\Omega $. Then the system
\eqref{eq:nlsys} has a V-bounded solution $x_*(t),\;t\in \mathbb R$, which satisfies
the inequality
\begin{gather}\label{eq:estVfin}
  V(t,x(t))\le F^{-1}\left(\frac{1}{2}\Big[\sup_{s\ge t}w^0(s)-\inf_{s\le t}w_0(s)\Big]\right)\le
  F^{-1} \left(\frac{w^+ -w_-}{2}\right)=:v_* \quad
  \forall  t\in \mathbb R.
\end{gather}
\end{theorem}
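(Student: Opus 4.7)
The plan is to apply the Wa\'zewski topological principle at each sufficiently negative initial time $t_k$, using condition (D) as the topological obstruction, to obtain forward solutions $x_k(\cdot)$ trapped inside $\mathcal{W}$; to estimate them a priori by Lemma~\ref{lem:estimate} together with hypothesis \eqref{eq:V*>}; to pass to a subsequential uniform-on-compacta limit on all of $\mathbb{R}$ by Arzel\`a--Ascoli; and finally to sharpen the resulting $V$-bound using the one-excursion estimate \eqref{eq:estV} and condition (B).

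First I would analyse egress from $\mathcal{W}$. By (C), $V^{-1}([0,v_0))\subset\mathcal{W}$, so every point of $\partial\mathcal{W}$ satisfies $V\ge v_0$; inequalities \eqref{eq:imcond2}--\eqref{eq:imcond3} then give $\dot W_f(s,x)\ge a(s,x)g(v_0)>0$ on the whole $\partial\mathcal{W}$. Consequently $\partial\mathcal{W}\cap W^{-1}(w^+)$ consists of strict egress points for the forward extended flow of \eqref{eq:nlsys}, while trajectories meeting $\partial\mathcal{W}\cap W^{-1}(w_-)$ strictly enter $\mathcal{W}$; the full egress set of $\mathcal{W}\cap\{s\ge t_k\}$ is therefore exactly $\bigcup_{s\ge t_k}\partial\mathcal{W}_s\cap W_s^{-1}(w^+)$. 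Condition (D) is tailored precisely to the hypothesis of Wa\'zewski's principle: $\mathcal{M}_{t_k}\cap\partial\mathcal{W}_{t_k}\cap W_{t_k}^{-1}(w^+)$ is a retract of this egress set but not of $\mathcal{M}_{t_k}$ itself, so the principle delivers $p_k\in\mathcal{M}_{t_k}\cap\mathcal{W}_{t_k}$ whose forward trajectory $x_k(t)$ remains in $\mathcal{W}$ for all $t\ge t_k$; in particular $W(t,x_k(t))\in(w_-,w^+)$ there.

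Feeding this into Lemma~\ref{lem:estimate} with $t_0=t_k$, $W^*\le w^+$, $W_*\ge w_-$, and choosing $t_k\to-\infty$ along a subsequence realising the three $\liminf$'s in (D) and in the definitions of $\omega_0$ and $\tilde\omega$, one has $V(t_k,p_k)\le\nu+\epsilon_k$, $W(t_k,p_k)\ge\tilde\omega-\epsilon_k$ (when $V(t_k,p_k)\ge v_0$), and $W_0\ge\omega_0-\epsilon_k$ at any crossing of $V=v_0$, with $\epsilon_k\to0$. Hypothesis \eqref{eq:V*>} then forces $V(t,x_k(t))<V^*$ for $t\ge t_k$ and all large $k$. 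Since $\mathrm{cls}(V^{-1}([0,V^*))\cap\mathcal{W})\subset\Omega$, the graphs of $x_k$ lie in a fixed compact subset of $\Omega$ on every finite time slab, so $\{x_k\}$ is equicontinuous there; a standard diagonal Arzel\`a--Ascoli extraction yields a subsequential limit $x_*(\cdot)$ solving \eqref{eq:nlsys} on all of $\mathbb{R}$, with $W(t,x_*(t))\in[w_-,w^+]$ and $V(t,x_*(t))\le V^*$.

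For the sharp estimate \eqref{eq:estVfin} I would use the last assertion of Lemma~\ref{lem:estimate} and its backward-time analogue (valid because (B) integrates to $\infty$ in both directions while inequalities \eqref{eq:imcond2}--\eqref{eq:imcond3} are unchanged) to conclude that $V(\tau,x_*(\tau))\le v_0$ for arbitrarily large $|\tau|$ in both directions. Hence, for any $t$ with $V(t,x_*(t))>v_0$ there is a finite maximal excursion $(t_*,t^*)\ni t$ on which $V>v_0$ and $V(t_*,x_*(t_*))=V(t^*,x_*(t^*))=v_0$; applying \eqref{eq:estV} and bounding $W(t^*,x_*(t^*))\le w^0(t^*)\le\sup_{s\ge t}w^0(s)$, $W(t_*,x_*(t_*))\ge w_0(t_*)\ge\inf_{s\le t}w_0(s)$ yields \eqref{eq:estVfin}. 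The case $V(t,x_*(t))\le v_0$ is immediate from $F^{-1}(0)=v_0$, and the concluding inequality $v_*=F^{-1}((w^+-w_-)/2)$ follows from the pointwise bounds $w^0\le w^+$, $w_0\ge w_-$ enforced by (C). The main obstacle I anticipate is the Wa\'zewski step: one must check carefully that the full extended-phase-space egress set from $\mathcal{W}\cap\{s\ge t_k\}$ is exactly the object appearing in (D) (no hidden egress through $\partial\Omega$ or at spatial infinity, which is where the containment hypothesis on $\mathrm{cls}(V^{-1}([0,V^*))\cap\mathcal{W})$ enters), and that $V(t_k,p_k)$ is controlled uniformly in $k$ so that the limiting trajectory genuinely exists on all of $\mathbb{R}$.
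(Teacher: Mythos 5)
Your proposal is correct and follows essentially the same route as the paper: strict-egress analysis of $\partial\mathcal{W}\cap W^{-1}(w^+)$, the Wa\'zewski principle applied via condition (D) at times $t_k\to-\infty$, the a priori bound $V<V^*$ from Lemma~\ref{lem:estimate} and \eqref{eq:V*>} to keep graphs in $\mathrm{cls}(V^{-1}([0,V^*))\cap\mathcal{W})\subset\Omega$, a compactness/limit extraction, and the excursion estimate \eqref{eq:estV} combined with (B) to obtain \eqref{eq:estVfin}. The only cosmetic difference is that the paper derives the sharp excursion bound for the approximating solutions $x_j$ (with $\tau_j\to-\infty$) before passing to the limit, whereas you apply it directly to the limit solution; both are valid.
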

\begin{proof}
By the conditions (C) and (D) the set $\partial \mathcal{W}\cap W^{-1}(w^+)$ does not
intersect  the set $V^{-1}([0,v_0))$. Then from the definition of guiding
function it follows that the set $\partial \mathcal{W}\cap W^{-1}(w^+)$ coincides with the
set of exit points of integral curves of the system  \eqref{eq:nlsys} from the
domain $\mathcal{W}$ and it consists  of the strict exit points only.

By the condition (D) we can choose a sequence of moments $t_j\to -\infty,\;j\to \infty
$, and a sequence  of sets  $\mathcal{M}_{t_j} \subset \mathcal{W}_{t_j}\cup[\partial
\mathcal{W}_{t_j}\cap W_{t_j}^{-1}(w^+)]$ in such a way that
\begin{gather*}
\sup\{V_{t_j}(x):x\in \mathcal{M}_{t_j}\}\le \nu +\delta,\quad
 \inf\{W_{t_j}(x):x\in \mathcal{M}_{t_j},\;V_{t_j}(x)\ge
  v_0\}\ge \tilde{\omega}-\delta, \\
V^*> \max\{F^{-1}(F(\nu +\delta)+w^+-\tilde{\omega}+\delta),
 F^{-1}(w^+-\omega_0)\}\ge \nu +\delta >v_0
\end{gather*}
for sufficiently small $\delta>0$ and for all $j$, and the intersection of each
$\mathcal{M}_{t_j}$ with $\partial \mathcal{W}\cap W^{-1}(w^+)$ be the retract for the set of exit
points from  $\mathcal{W}\cap \{(t,x):t\ge t_j\}$ but there does not exist a
retraction of $\mathcal{M}_{t_j}$ on $\mathcal{M}_{t_j}\cap \partial \mathcal{W}\cap W^{-1}(w^+)$. Then by
Wa\'zewski principle for any $j$ there exists a point  $(t_j,x_{0j})\in
\mathcal{M}_{t_j}$ such that the nonextendable  solution $x_j(t),\;t\in I_j$, which
satisfies the initial condition $x_j(t_j)=x_{0j}$ has the property
\begin{gather*}
  (t,x_j(t))\in \mathcal{W}  \quad
  \forall t\in [t_j,\infty)\cap I_j.
\end{gather*}
Observe that $V(t_j,x_{0j})\le \nu +\delta $, and thus by the
lemma~\ref{lem:estimate} setting $I=[t_j,\infty)\cap I_j$, $v_j(t)=V(t,x_j(t))$ we
obtain $v_j(t)<V^*,\;t\in I$. Hence, taking into account the condition of the
theorem we have
\begin{gather*}
 (t,x_j(t))\in
 \mathrm{cls}\left(V^{-1}\bigl([0,V^*)\bigr)\cap\mathcal{W}\right)\subset \Omega,\quad t\in [t_j,\infty)\cap I_j.
\end{gather*}
In view of this  we conclude that $[t_j,\infty)\subset I_j$.

Next, applying the lemma~\ref{lem:estimate} again, we can find $\tau_j$ for which
$v_j(\tau_j)\le v_0$, and from (B) it follows that $\tau_j\to -\infty,\;j\to -\infty $.
Besides, if there exists at least one $t\ge \tau_j$ for which $v_j(t)>v_0$, then
there exist moments $t_*$, $t^*$ such that $\tau_j\le t_*<t$, $t<t_*$ and
$v_j(t_*)=v_j(t^*)=v_0$, but $v_j(t)>v_0$ for $t\in (t_*,t^*)$. Then in virtue
of inequality \eqref{eq:estV}, for any pair of such moments we have
\begin{gather} \label{eq:estvj}
  v_j(t)\le F^{-1} \left(\frac{w^0(t^*)-w_0(t_*)}{2}\right)\quad
  \forall t\in [t_*,t^*],
\end{gather}
and thus $v_j(t)\le v_*$ for all $t\ge \tau_j$.

Now one can prove the existence of V-bounded solution $x_*(t)$ by the known
scheme (see, e.g., \cite{Che74,Iva85b,KraZab75}. Namely, if we denote by
$x(t,t_0,x_0)$  the solution  which for $t=t_0$ takes the value  $x_0$, then
setting $\xi_j:=x_j(0)$, we obtain the equalities
\begin{gather*}
  x_j(t)=x(t,0,x_j(0))=x(t,0,\xi_j),\quad t\in [t_j,\infty).
\end{gather*}
Having selected from the sequence $\xi_j \in
\mathrm{cls}\,\left(V_0^{-1}([0,v_*])\cap\mathcal{W}_0\right)\subset \Omega_0 $ a subsequence
converging to $x_*$, put $x_*(t):=x(t,0,x_*)$.  Using the reductio ad absurdum
reasoning it is easy to show that on the maximal existence interval $I$ of this
solution we have the inclusion $$(t,x_*(t))\in \mathrm{cls}(V^{-1}([0,v_*])\cap
\mathcal{W}).$$ Therefore  $I=\mathbb R$ and $V(t,x_*(t))\le v_*$ for all $t\in \mathbb R$. Finally,
taking into account  \eqref{eq:estvj}, we arrive at the inequality
\eqref{eq:estVfin}.
\end{proof}

\begin{remark}\label{rem:5} As is easily seen from the proof of the Theorem~\ref{th:exbs},
it is sufficient to require that the inequalities
\eqref{eq:imcond1},\eqref{eq:imcond2} hold true  on the set
 $\mathrm{cls}\left(V^{-1}\bigl([v_0,V^*)\bigr)\cap\mathcal{W}\right)$ only, and the inequalities
\eqref{eq:imcond3} --- for $v\in [v_0,V^*]$ only. \end{remark}

\begin{theorem}\label{th:unbs} Let  $\tilde{\Omega}$ be a subdomain of the domain $\Omega $
and let $$\Omega^*:=\{(t,z)\in \mathbb R\times \mathbb R^{n}:z=x-y,\;(t,x)\in \tilde{\Omega},\;(t,y)\in
\tilde{\Omega} \}.$$ Suppose that there exist functions $U(\cdot)\in C^1(\Omega^* \mapsto \mathbb R)$,
$H(\cdot),h(\cdot)\in C(\mathbb R_+ \!\!\mapsto\!\! \mathbb R_+)$, $b(\cdot),\beta(\cdot)\in C(\mathbb R \mapsto(0,\infty))$ such
that:

1) the function $h(\cdot)$ is nondecreasing, the function $H(\cdot)$ is strictly
monotonically increasing, and in addition,
\begin{gather*}
  \limsup_{t\to \pm \infty}\frac{1}{b(t)}\left|\int_{0}^{t}\beta (t)h\circ H^{-1} \left(\frac{u}{b(t)}\right)\,dt\right|
  =\infty
\end{gather*}
for any $u>0$;

2) for all $(t,x),(t,y)\in \tilde{\Omega}$, the following inequalities hold true
\begin{gather*}
|U(t,x-y)|\le
 b(t) H(V(t,x-y)),\\
  U'_t(t,x-y)+U'_x(t,x-y)\cdot(f(t,x)-f(t,y))\ge \beta (t)h(V(t,x-y)),\quad
\end{gather*}
where $V(t,x)$ is an estimating function. Then the system \eqref{eq:nlsys}
cannot have two different nonextendable solutions $x(t)$, $y(t)$, $t\in \mathbb R$,
whose graphs lie in  $\tilde{\Omega}$ and which have the property
\begin{equation}\label{eq:supVx-y}
  \sup_{t\in \mathbb R}
V(t,x(t)-y(t))<\infty.
\end{equation}
\end{theorem}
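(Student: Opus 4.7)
The plan is a Lyapunov-type contradiction argument. Suppose, for contradiction, that there are two distinct nonextendable solutions $x(t),\,y(t),\,t\in\mathbb R$, with graphs in $\tilde\Omega$ and $M:=\sup_{t\in\mathbb R}V(t,x(t)-y(t))<\infty$. Set $z(t):=x(t)-y(t)$; by the assumed uniqueness for \eqref{eq:nlsys}, $z(t)\ne 0$ for every $t$, so $V(t,z(t))>0$ on all of $\mathbb R$.

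Consider the scalar function $\phi(t):=U(t,z(t))$. Hypothesis~2) gives $\dot\phi(t)\ge\beta(t)h(V(t,z(t)))\ge 0$, hence $\phi$ is nondecreasing; the upper estimate provides $|\phi(t)|\le b(t)H(V(t,z(t)))\le b(t)H(M)$. I claim $\phi\not\equiv 0$, for otherwise $\dot\phi\equiv 0$ together with $\beta>0$ would force $h(V(t,z(t)))\equiv 0$, incompatible with the nontriviality of $h$ implicit in hypothesis~1). Thus $\phi(t_0)\ne 0$ for some $t_0$; I shall treat $\phi(t_0)>0$ and work at $t\to+\infty$, the opposite sign being handled symmetrically at $t\to-\infty$.

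For $t\ge t_0$, monotonicity of $\phi$ yields $\phi(t)\ge\phi(t_0)>0$. Inverting the upper estimate via the strict monotonicity of $H$ gives $V(t,z(t))\ge H^{-1}(\phi(t)/b(t))\ge H^{-1}(\phi(t_0)/b(t))$; the monotonicity of $h$ then upgrades the lower bound on $\dot\phi$ to $\dot\phi(t)\ge\beta(t)(h\circ H^{-1})(\phi(t_0)/b(t))$. Integrating from $t_0$ to $t$ and combining with the upper bound $\phi(t)\le b(t)H(M)$ produces
$$b(t)H(M)\ge\phi(t_0)+\int_{t_0}^{t}\beta(s)(h\circ H^{-1})(\phi(t_0)/b(s))\,ds.$$
Dividing by $b(t)$ and taking $\limsup_{t\to+\infty}$, hypothesis~1) with $u=\phi(t_0)$ forces the right-hand side to $+\infty$, whence $H(M)=+\infty$, a contradiction.

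The two delicate points are: (i) excluding $\phi\equiv 0$, which rests on extracting nontriviality of $h$ from the divergence condition in hypothesis~1); (ii) reconciling the integration interval $[t_0,t]$ with the $[0,t]$ appearing in hypothesis~1), which is settled by noting that $\phi(t_0)\le b(t)H(M)$ keeps $b(t)$ bounded away from zero on $[t_0,+\infty)$, so the omitted portion $\int_0^{t_0}$ contributes only a bounded quantity after division by $b(t)$ and the divergence in~1) transfers to the truncated integral.
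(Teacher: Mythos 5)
Your argument is correct and follows essentially the same route as the paper's: both integrate the monotone function $U(t,x(t)-y(t))$ along the difference of the two solutions, split according to whether it is eventually positive (working at $t\to+\infty$) or negative somewhere (working at $t\to-\infty$), and contradict the divergence condition in hypothesis 1) after inverting the upper bound through $H$. Your one soft spot --- ruling out $\phi\equiv 0$, which ultimately needs $h$ to be positive at positive arguments --- is present in the paper's proof in the same form (it asserts $\dot u(t_0)>0$ directly from $v(t_0)>0$), so you match the original's level of rigor, and your explicit treatment of the $\int_0^{t_0}$ tail and of the lower bound on $b(t)$ is, if anything, more careful than the paper's.
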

\begin{proof} Suppose that the system \eqref{eq:nlsys} has a pair
of solutions $x(t)$, $y(t),\;t\in \mathbb R$ such that $(t,x(t))$, $(t,y(t))\in
\tilde{\Omega}$ for all $t\in \mathbb R$. Let us show that for these solutions the condition
\eqref{eq:supVx-y} fails.

Consider the functions $u(t):=U(t,x(t)-y(t)),\;v(t):=V(t,x(t)-y(t))$. By
condition, the function $u(\cdot)$ does not decrease. Hence, there exist (either
finite or infinite) limits $u_*=\lim_{t\to -\infty}u(t)$, $u^*=\lim_{t\to \infty}u(t)$.
If we suppose that $x(t)\not\equiv y(t)$, then there exists $t_0$ such that
$x(t_0)\ne y(t_0)$, from whence $v(t_0)>0$ and $\dot{u}(t_0)>0$. For this reason,
$u^*> u(t_0)> u_*$.

First suppose  that  $u_*\ge 0$. Then $u(t)> u(t_0)>0$ for $t> t_0$. Since
$$v(t)\ge H^{-1}(u(t_0)/b(t)),$$ then
\begin{gather*}
  u(t)\ge u(t_0)+ \int_{t_0}^{t}\beta (s)h\circ H^{-1} \left(\frac{u(t_0)}{b(s)}\right)\,ds
\end{gather*}
and
\begin{gather*}
  H(v(t))\ge \frac{1}{b(t)}\int_{t_0}^{t}\beta (s)h\circ H^{-1}
  \left(\frac{u(t_0)}{b(s)}\right)\,ds,\quad t\ge t_0.
\end{gather*}
Thus, $\limsup_{t\to \infty}v(t)=\infty $.

Now suppose  that  $u_*<0$. Then there exists  $t'$ such that $u(t')<0$. Then
$u(t)\le u(t')$ for all $t<t'$ and $v(t)\ge H^{-1}(|u(t')|/b(t))$ for $t<t'$.
Then
\begin{gather*}
  u(t')-u(t)\ge \int_{t}^{t'}\beta (s)h\circ H^{-1}
  \left(\frac{|u(t')|}{b(s)}\right)\,ds,\quad t\le t',
\end{gather*}
from whence, as above, we have $\limsup_{t\to -\infty}v(t)=\infty $.
\end{proof}

\section{Studying V-bounded solutions by means of pair of quadratic forms}
\label{PairQuadForms}

Consider the case where the V--W-pair of the system \eqref{eq:nlsys} is a pair
of  quadratic forms
\begin{gather}\label{eq:VWbilin}
  V(t,x)=\langle B(t)x,x\rangle,\quad W(t,x)=\langle C(t)x,x\rangle,
\end{gather}
where $\langle \cdot,\cdot \rangle $ is a scalar product in $\mathbb R^{n}$, $\left\{B(t)\right\}_{t\in \mathbb R}$
and $\left\{C(t)\right\}_{t\in \mathbb R}$ are families of symmetric nondegenerate operators
in $\mathbb R^{n}$ smoothly depending on parameter  $t$ and satisfying the conditions:

\medskip

\noindent (a):\quad   for any $t\in \mathbb R$, the operator $B(t)$ is positively
definite;

\noindent (b):\quad \parbox[t]{\ppp}{for any $t\in \mathbb R$, there exist projectors
$P_+(t), P_-(t)$ on corresponding invariant subspaces $\mathbb L_{+}(t),\mathbb L_{-}(t)$ of
operator $C(t)$ such that the restriction of $C(t)$ on $\mathbb L_+(t)$ (on $\mathbb L_-(t)$)
is a positively definite (negatively definite) operator.}

\medskip

Observe that since the subspaces $\mathbb L_+(t),\;\mathbb L_-(t)$ are mutually orthogonal
then the projectors  $P_+(t),\;P_-(t)$ are symmetric.

From $C(t)$-invariance of these subspaces it follows that
$P_{\pm}(t)C(t)=C(t)P_{\pm}(t)$ and, as a consequence, we have the representation
\begin{gather*}
  C(t)=(P_+(t)+P_-(t))C(t)(P_+(t) +P_-(t))=P_+(t)C(t)P_+(t) +P_-(t)C(t)P_-(t).
\end{gather*}
Put
\begin{equation}\label{eq:C+C_}
  C_+(t):=P_+(t)C(t)P_+(t),\;C_-(t)=P_-(t)C(t)P_-(t)
\end{equation}
Obviously, the kernel of the operator $C_+(t)$ (operator $C_-(t)$) is the
subspace  $\mathbb L_-(t)$ (subspace $\mathbb L_+(t)$), and the restriction of this operator
on $\mathbb L_+(t)$ (on $\mathbb L_-(t)$) is a positively definite (negatively definite)
operator.

Let the right-hand side of the system \eqref{eq:nlsys} admits the
representation
\begin{gather*}
  f(t,x)=A(t,x)x+f_0(t)
\end{gather*}
where $A(\cdot,\cdot)\in C(\Omega \mapsto \mathrm{Hom}\, \mathbb R^n),\;f_0(t):=f(t,0)$, and in addition,

\medskip

\noindent(c):\quad  \parbox[t]{\ppp}{there exist numbers $V^*>0$, $w_- <0$,
$w^+>0$ such that the domain $\Omega $ contains the set
\begin{gather*}
  V^{-1}\bigl([0,V^*]\bigr)\cap W^{-1}\bigl([w_-,w^+]\bigr)=\\
  \bigr\{(t,x)\in \mathbb R^{1+n}:\langle B(t)x,x\rangle \le V^*,\;w_-\le \langle C(t)x,x\rangle \le w^+\bigl\}.
\end{gather*}}

\medskip

The number  $v_0$ in the definition of the guiding function must be chosen in
such a way that the set inclusions from condition (C) hold true. Denote by $\lambda
=\lambda^+(t)$ and $\lambda =\lambda_-(t)$, respectively, the maximal and the minimal
characteristic values of the pencil $C(t)-\lambda B(t)$. Since
\begin{gather*}
  \lambda^+(t)=\max\big\{\langle C(t)x,x\rangle :\langle B(t)x,x\rangle =1\big\},\\
  \lambda_-(t)=\min\big\{\langle C(t)x,x\rangle :\langle B(t)x,x\rangle =1\big\}
\end{gather*}
(see, e.g., \cite{Gan67}), then taking into account that the function $W_t(x)$
has the unique critical point $x=0$, we have
\begin{gather*}
  w^0(t):=\max\big\{\langle C(t)x,x\rangle :\langle B(t)x,x\rangle \le
  v_0\big\}=\lambda^+(t)v_0,\\
w_0(t):=\min\big\{\langle C(t)x,x\rangle :\langle B(t)x,x\rangle \le v_0\big\}=\lambda_-(t)v_0.
\end{gather*}
Hence, in order that the set inclusions from condition (C) hold true it is
sufficient to assume that

\medskip

\noindent (d):\quad \parbox[t]{\ppp}{the inequalities
\begin{gather*}
\lambda_-(t)v_0\ge w_-,\quad \lambda^+(t)v_0\le w^+ \quad \forall t\in \mathbb R.
\end{gather*}
are fulfilled}

\medskip

Now we impose a number of conditions on the mappings  $A(t,x)$ and $f_0(t)$ to
ensure the existence of V-bounded solution of the system \eqref{eq:nlsys} in
virtue of the Theorem~\ref{th:exbs} and the Remark~\ref{rem:5}.

Let $\Lambda_V(t,x)$ be the maximal by absolute value  characteristic value of the
pencil $$B(t)A(t,x)+A^*(t,x)B(t)+\dot{B}(t)-\lambda B(t)$$ (here $A^*$ is the operator
conjugate with $A$), and let $\lambda_W(t,x)$ be the minimal characte\-ris\-tic
value of the pencil
 $$C(t)A(t,x)+A^*(t,x)C(t)+\dot{C}(t)-\lambda
B(t).$$ Put
\begin{gather*}
  \varphi(t):=\sqrt{\langle B(t)f_0(t),f_0(t)\rangle},\quad
  \psi(t):=\sqrt{\langle B^{-1}(t)C(t)f_0(t),C(t)f_0(t)\rangle}.
\end{gather*}
Then taking into account the inequalities
\begin{gather*}
  \left|\langle\big(2 B(t)A(t,x)+\dot{B}(t)\big)x,x\rangle \right|\le \left|\Lambda_V(t,x)\right|\langle B(t)x,x\rangle,\\
  \langle\big( 2C(t)A(t,x)+\dot{C}(t)\big)x,x\rangle \ge \lambda_W(t,x)\langle B(t)x,x\rangle,\\
   \langle B(t)f_0(t),x\rangle \le \varphi(t)\sqrt{\langle B(t)x,x\rangle},\\
   \langle C(t)f_0(t),x\rangle \ge
  -\psi(t)\sqrt{\langle B(t)x,x\rangle}
\end{gather*}
we obtain
\begin{gather*}
  \left|\dot{V}_{f(t,x)}(t,x)\right|\le \left|\Lambda_V(t,x)\right|V(t,x)+2\varphi(t)\sqrt{V(t,x)},\\
   \dot{W}_{f(t,x)}(t,x)\ge \lambda_W(t,x)V(t,x)-2\psi(t)\sqrt{V(t,x)}.
\end{gather*}
Now impose  on the system the following conditions:

\medskip

\noindent (e):\quad \parbox[t]{\ppp}{there exist positive constants
$c_1,\;c_2,\;c_3,\;\sigma $ such that
\begin{gather*}
  c_2^2 <v_0,\quad \sigma \le 1,
\end{gather*}
and in the domain  $V^{-1}\bigl([v_0,V^*]\bigr)\cap
W^{-1}\bigl([w_-,w^+]\bigr)$ the inequalities}
\begin{gather*}
2\varphi(t)\le c_1\left|\Lambda_V(t,x)\right|,\quad 2\psi (t)\le c_2\lambda_W(t,x),\quad
  \left|\Lambda_V(t,x)\right|\le c_3\langle B(t)x,x\rangle^{\sigma}\lambda_W(t,x);
\end{gather*}
\phantom{(e):\quad} holds true;

\noindent (f): \quad \parbox[t]{\ppp}{the function $\alpha(t):=\inf \left\{\lambda_W(t,x):
x\in \Omega_t, \;V_t(x)>v_0\right\}$ has the properties $$\int_{-\infty}^{0}\alpha
(s)\,ds=\int_{0}^{\infty}\alpha(s)\,ds=\infty.$$}

\medskip

Put
\begin{gather*}
  G(v)=c_3v^\sigma(v+c_1 \sqrt{v}), \quad g(v)= v-c_2 \sqrt{v},\quad a(t,x)=\lambda_W(t,x),
\end{gather*}
and in order to satisfy the rest of conditions which guarantee the existence of
V-bounded solution, define the family of ellipsoidal disks
 \begin{gather*}
  \mathcal{M}_t:=\{t\}\times \{x\in \mathbb L_+(t):\langle C(t)x,x\rangle \le w^+\}.
\end{gather*}
Now prove the following proposition.
\begin{lemma}\label{lem:retrquadform3} For any
$c>0$, $t_0\in \mathbb R$ there exists a retraction of the set $W^{-1}(c)$ to the
ellipsoid
\begin{gather*}
\left\{(t,x)\in \mathbb R^{1+n}:t=t_0,\quad x\in
  \mathbb L_+(t_0),\quad \langle C(t_0)x,x\rangle =c\right\}.
\end{gather*}
 \end{lemma}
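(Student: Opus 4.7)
The plan is to build the retraction as the composition $r=\tau\circ\rho$ of a ``vertical'' (fibre-wise in $t$) retraction $\rho$ followed by a ``horizontal'' retraction $\tau$ transporting everything to the slice $\{t_0\}\times\mathbb R^{n}$. The vertical step uses the splitting $x=P_+(t)x+P_-(t)x$ to collapse each indefinite quadric $W_t^{-1}(c)$ onto the positive ellipsoid $E_t:=\{x\in\mathbb L_+(t):\langle C(t)x,x\rangle=c\}$; the horizontal step uses a continuous trivialization of the bundle $t\mapsto\mathbb L_+(t)$ to identify each $E_t$ with $E_{t_0}$.

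For $\rho$, note that for $(t,x)\in W^{-1}(c)$ the splitting and \eqref{eq:C+C_} give
\[
\langle C(t)P_+(t)x,P_+(t)x\rangle \;=\; c \;-\; \langle C(t)P_-(t)x,P_-(t)x\rangle \;\ge\; c \;>\; 0,
\]
since $C_-(t)$ is negative semidefinite. In particular $P_+(t)x\ne 0$, so
\[
\rho(t,x) \;:=\; \Bigl(t,\; \sqrt{c/\langle C(t)P_+(t)x,P_+(t)x\rangle}\; P_+(t)x\Bigr)
\]
is a continuous map from $W^{-1}(c)$ onto $E:=\bigcup_t\{t\}\times E_t$, and if $x\in E_t$ then $P_+(t)x=x$ and the scaling factor equals $1$, so $\rho$ fixes $E$ pointwise.

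For $\tau$, the orthogonal projectors $P_+(t)$ depend smoothly on $t$ and have constant rank, so $\mathbb L_+(t)$ is a smooth subbundle of the trivial bundle $\mathbb R\times\mathbb R^{n}$. Since its base $\mathbb R$ is contractible, this subbundle is trivializable; concretely, the Kato-type ODE $\dot\Phi(t)=[\dot P_+(t),P_+(t)]\Phi(t)$, $\Phi(t_0)=I$, yields a continuous family of linear isomorphisms $\Phi(t):\mathbb L_+(t_0)\to\mathbb L_+(t)$ with $\Phi(t_0)=\mathrm{id}$. Using this, set
\[
\tau(t,y) \;:=\; \Bigl(t_0,\; \sqrt{c/\langle C(t_0)\Phi(t)^{-1}y,\Phi(t)^{-1}y\rangle}\; \Phi(t)^{-1}y\Bigr),\qquad (t,y)\in E,
\]
which is well defined because $y\in E_t$ forces $\Phi(t)^{-1}y\ne 0\in\mathbb L_+(t_0)$, is continuous in $(t,y)$, and reduces to the identity when $t=t_0$.

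The composite $r:=\tau\circ\rho:W^{-1}(c)\to\{t_0\}\times E_{t_0}$ is then the required retraction: continuous, landing in the target ellipsoid, and fixing every $(t_0,x)$ with $x\in\mathbb L_+(t_0)$, $\langle C(t_0)x,x\rangle=c$. The only mildly non-trivial ingredient is the continuous global trivialization $\Phi(t)$; that difficulty is essentially the triviality of vector bundles over $\mathbb R$, and the Kato parallel-transport ODE provides a concrete smooth realization tailored to the smoothly varying projector family at hand.
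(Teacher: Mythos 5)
Your proof is correct and follows essentially the same two-step strategy as the paper's: a fibre-wise radial retraction of the quadric $W_t^{-1}(c)$ onto the positive ellipsoid in $\mathbb L_+(t)$ via $x\mapsto \theta\,P_+(t)x$ (justified exactly as in the paper by $\langle C_+(t)x,x\rangle>0$ on the level set), combined with a global trivialization of the smoothly varying subbundle $t\mapsto\mathbb L_+(t)$ over the contractible base $\mathbb R$. The only, inessential, differences are the order of composition and how the trivialization is realized: you use the Kato parallel-transport ODE $\dot\Phi=[\dot P_+,P_+]\Phi$, while the paper lifts the Grassmannian curve $\gamma_+$ through the principal bundle $O(n)\to G(n,n_+)$ and conjugates by $\sqrt{S(t)}$ so that the horizontal map carries $W_t$ exactly onto $W_{t_0}$ before the radial collapse.
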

\begin{proof} First we observe that for arbitrary  $t\in \mathbb R$ ³ $c>0$ there exists
a retraction of $\mathcal{N}_{t,c}:=\{x\in \mathbb R^n:\langle C(t)x,x\rangle =c\}$ to the intersection
of this set by the subspace $\mathbb L_+(t)$. In fact, one can define such a
retraction
 by a mapping $x \mapsto \theta(t,x)P_+(t)x$, provided that the scalar function
 $\theta(t,x)$ is determined from condition $\langle C_+(t)\theta(t,x)x,\theta(t,x)x\rangle =c$ for
all $x\in \mathcal{N}_{t,c}$. Since $c>0$, then $\mathcal{N}_{t,c}\cap
\mathbb L_{-}(t)=\varnothing$, and hence, $\langle C_+(t)x,x\rangle >0$ for all $x\in
\mathcal{N}_{t,c}$. Therefore
\begin{gather*}
  \theta(t,x)=\sqrt{\frac{c}{\langle C_+(t)x,x\rangle}}.
\end{gather*}

Now it remains only to show that the set  $\{t_0\}\times
\mathcal{N}_{t_0,c}=W^{-1}(c)\cap\Pi_{t_0}$ is a retract of $W^{-1}(c)$. Introduce the
operator $S(t):=\sqrt{C^2(t)}=C_+(t) -C_-(t)$, where the operators $C_{\pm}(t)$
are defined in \eqref{eq:C+C_}. Then we get
\begin{gather*}
  C(t)=S(t)(P_+(t) -P_-(t))=(P_+(t) -P_-(t))S(t).
\end{gather*}
The quadratic form  $\langle C(t)x,x\rangle $ by means of the substitution
$x=\left[\sqrt{S(t)}\right]^{-1}y$ is reduced to  $\langle (P_+(t)-P_-(t))y,y\rangle$.
Obviously, $P_+(t)-P_-(t)$ is  a symmetric orthogonal inversion operator:
$$(P_+(t)-P_-(t))^*=P_+(t)-P_-(t),\quad  (P_+(t)-P_-(t))^2=E.$$

 From the representation of projector via the Riesz formula (see, e.g.,
\cite[c.~34]{DalKre70}) it follows that the projectors $P_{\pm}(t)$  smoothly
depend on parameter. Therefore the mutually orthogonal subspaces $\mathbb L_+(t)$ and
$\mathbb L_-(t)$ have constant dimensions $n_+$,  $n_{-}$ and define smooth curves
$\gamma_+$, $\gamma_-$ in Grassmannian  manifolds $G(n,n_{+})$ and $G(n,n_{-})$
respectively. Since $G(n,n_{+})$ is a base space of a principal fiber bundle,
namely, $G(n,n_{+})=O(n)/O(n_+)\times O(n_-)$, then there exists a smooth curve
$Q(t)$ in $O(n)$, which is projected onto  $\gamma_+(t)$, the operator $Q(t_0)$
being the identity element $E$  of the group $O(n)$. Obviously,
$\mathbb L_+(t)=Q(t)\mathbb L_+(t_0)$ and, as a consequence,
\begin{gather*}
  P_{\pm}(t)=Q(t)P_{\pm}(t_0)Q^{-1}(t).
\end{gather*}

From the above reasoning it follows that the change of variables
\begin{gather*}
  x=\left[\sqrt{S(t)}\right]^{-1}Q(t)\sqrt{S(t_0)}y
\end{gather*}
reduces the quadratic form  $W(t,x):=\langle C(t)x,x\rangle $ to $W(t_0,y)=\langle C(t_0)y,y\rangle
$, and then the mapping
\begin{gather*}
  \mathbb R\times \mathbb R^{n} \mapsto\{t_0\}\times \mathbb R^n:\quad  (t,x)  \mapsto
  \left(t_0,\sqrt{S(t)}Q^{-1}(t)\left[\sqrt{S(t_0)}\right]^{-1}x\right)
\end{gather*}
define a retraction of the set $W^{-1}(c)$ to the set $W^{-1}(c)\cap \Pi_{t_0}$.
\end{proof}

Denote by $\lambda =\lambda_-^+(t)$ the minimal characteristic value of the pencil
\begin{gather*}
  P_+(t)\big[C(t)-\lambda B(t)\big]\big|_{\mathbb L_+(t)}.
\end{gather*}
Then
\begin{gather*}
  \max\{\langle B(t)x,x\rangle :x\in \mathcal{M}_t\}=\frac{w^+}{\lambda_-^+(t)},
\end{gather*}
and the condition (D) will be fulfilled once we suppose that

\medskip

\noindent (g): \quad \parbox[t]{\ppp}{the inequality
\begin{gather*}
  \limsup_{t\to -\infty}\lambda_-^+(t)>0
\end{gather*}
holds true.}

\medskip

\noindent In this case we have
\begin{gather}\label{eq:nu}
  \nu =\liminf_{t\to -\infty}\frac{w^+}{\lambda_-^+(t)}.
\end{gather}

 From the above reasoning it follows that
\begin{gather*}
  \min\{\langle C(t)x,x\rangle :x\in \mathcal{M}_t,\;\langle  B(t)x,x\rangle \ge v_0\}=
  \lambda_-^+(t)v_0.
\end{gather*}
Hence,
\begin{gather}\label{eq:tildeww0}
  \tilde{\omega}=\liminf_{t\to -\infty}\lambda_-^+(t)v_0,
  \quad \omega_0:=\liminf_{t\to -\infty}\lambda_-(t)v_0.
\end{gather}

\medskip
We have established the following result.
\begin{theorem}\label{th:applthbs}
Let the functions $V(t,x)$, $W(t,x)$ are defined by \eqref{eq:VWbilin} and the
system   \eqref{eq:nlsys} satisfies the conditions (a)--(g). Put
\begin{gather*}
  F(v):=
  \frac{1}{c_3}\int_{v_0}^{v}\frac{u-c_2\sqrt{u}}{u^{\sigma}(u+c_1\sqrt{u})}\,du
\end{gather*}
and assume that the inequality \eqref{eq:V*>} is valid where the numbers
$\nu,\;\tilde{\omega},\omega_0$ are defined by the formulae \eqref{eq:nu},
\eqref{eq:tildeww0}, and also
$\mathrm{cls}\left(V^{-1}\bigl([0,V^*)\bigr)\cap\mathcal{W}\right)\subset \Omega $. Then the system
has a V-bounded solution $x(t)$ such that
\begin{gather*}
  V(t,x(t))\le
  F^{-1}\left(\frac{v_0}{2}\Big[\sup_{s\ge t}\lambda^+(s)-\inf_{s\le t}\lambda_-(s)\Big]\right)\quad
\quad \forall   t\in \mathbb R.
\end{gather*}
\end{theorem}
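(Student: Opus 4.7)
The plan is to verify that, under hypotheses (a)--(g), the quadratic pair $(V,W)$ in \eqref{eq:VWbilin} satisfies all the assumptions of Theorem~\ref{th:exbs}; the bound then falls out by substituting the explicit expressions for $w^0(t)$ and $w_0(t)$ into the conclusion \eqref{eq:estVfin}. First, condition (a) immediately gives that $V_t(x)=\langle B(t)x,x\rangle$ is positively definite with unique critical point at the origin and proper sublevel sets, so $V$ is an estimating function. Next, combining the chain-rule bounds $|\dot{V}_{f}|\le |\Lambda_V|V+2\varphi\sqrt{V}$ and $\dot{W}_{f}\ge \lambda_W V-2\psi\sqrt{V}$ derived just before the theorem with the three inequalities of (e), the factor $|\Lambda_V|$ is dominated by $c_3 V^{\sigma}\lambda_W$ and the $\sqrt{V}$-terms are absorbed, yielding \eqref{eq:imcond1}--\eqref{eq:imcond2} with the declared choices $a(t,x)=\lambda_W(t,x)$, $G(v)=c_3 v^{\sigma}(v+c_1\sqrt{v})$, $g(v)=v-c_2\sqrt{v}$; the positivity and monotonicity of $g$ required by \eqref{eq:imcond3} follow from $v_0>c_2^2$, which is part of (e).

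For (A) I would use that $g(u)/G(u)$ behaves like $c_3^{-1}u^{-\sigma}$ as $u\to\infty$, and $\sigma\le 1$ is exactly the borderline making $F(v)\to\infty$. Condition (B) is literally (f). For (C), the extremal characterization of the pencil $C(t)-\lambda B(t)$ yields $\lambda_-(t)V_t(x)\le W_t(x)\le \lambda^+(t)V_t(x)$, so (d) (together with $\lambda_-(t)<0<\lambda^+(t)$ as forced by (b)) implies the inclusion $V^{-1}([0,v_0))\subset W^{-1}((w_-,w^+))$, while $V^{-1}(v_0)\subset W^{-1}([w_-,w^+])\subset \Omega$ by (c); that $w^+$ belongs to the range of $W_t$ follows from the positive definiteness of the restriction of $C(t)$ to $\mathbb{L}_+(t)$.

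The main obstacle is (D), and this is precisely what Lemma~\ref{lem:retrquadform3} was designed to handle. Taking $\mathcal{M}_t:=\{t\}\times\{x\in \mathbb{L}_+(t):\langle C(t)x,x\rangle\le w^+\}$, its boundary relative to $\mathbb{L}_+(t)$ is the ellipsoid $\mathcal{M}_t\cap W_t^{-1}(w^+)$, which lies in $\partial \mathcal{W}_t$ and is not a retract of the disk $\mathcal{M}_t$ by the classical no-retraction theorem for the finite-dimensional ball. Applying Lemma~\ref{lem:retrquadform3} with $c=w^+$ and $t_0=t$ produces a retraction of $W^{-1}(w^+)$ onto this ellipsoid, which restricts to the required retraction of $\bigcup_{s\ge t}\partial \mathcal{W}_s\cap W_s^{-1}(w^+)$. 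Since $\sup\{V_t(x):x\in\mathcal{M}_t\}=w^+/\lambda_-^+(t)$, hypothesis (g) makes this finite along a subsequence $t\to -\infty$ and produces the value of $\nu$ in \eqref{eq:nu}. To finish, I would read off $w^0(s)=\lambda^+(s)v_0$ and $w_0(s)=\lambda_-(s)v_0$ from the variational formulae just before (d), invoke Theorem~\ref{th:exbs}, and substitute into \eqref{eq:estVfin} to factor out $v_0/2$ in the stated bound.
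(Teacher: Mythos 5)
Your proposal is correct and follows essentially the same route as the paper, which ``proves'' Theorem~\ref{th:applthbs} implicitly through the development preceding its statement: verifying that the quadratic pair with $a=\lambda_W$, $G(v)=c_3v^{\sigma}(v+c_1\sqrt{v})$, $g(v)=v-c_2\sqrt{v}$ satisfies (A)--(D) via conditions (a)--(g) and Lemma~\ref{lem:retrquadform3}, and then substituting $w^0(s)=\lambda^+(s)v_0$, $w_0(s)=\lambda_-(s)v_0$ into \eqref{eq:estVfin}. Your handling of the no-retraction argument for the ellipsoidal disks $\mathcal{M}_t$ and of the identification of $\nu$, $\tilde{\omega}$, $\omega_0$ matches the paper's.
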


To make the estimates of  V-bounded solution more efficient let us utilize
Remark~\ref{rem:4} and estimate the $F(v)$ from below. If $\sigma <1$, then for
$u\ge c_2^2$ we have
\begin{gather*}
  \frac{u-c_2\sqrt{u}}{u^{\sigma}(u+c_1\sqrt{u})}=
  \frac{1-c_2u^{-1/2}}{u^{\sigma}(1+c_1u^{-1/2})}\ge
  \frac{\sqrt{v_0}}{\sqrt{v_0}+c_1}u^{-\sigma}(1-(u/c_2^2)^{-1/2})\ge \\
   \frac{\sqrt{v_0}}{\sqrt{v_0}+c_1}(u^{-\sigma}-c_2^{1-\sigma}u^{-1/2-\sigma /2}).
\end{gather*}
Hence, in this case
\begin{gather*}
  F(v)\ge\frac{\sqrt{v_0}}{(1-\sigma)(\sqrt{v_0}+c_1)c_3}
  \left[v^{1-\sigma}-2c_2^{1-\sigma}v^{(1-\sigma)/2}-v_0^{1-\sigma}+2c_2^{1-\sigma}v_0^{(1-\sigma)/2}\right]=:F_1(v)
\end{gather*}
and
\begin{gather*}
  F_1^{-1}(z)=\left[\sqrt{\frac{(1-\sigma)(\sqrt{v_0}+c_1)c_3}{\sqrt{v_0}}z+
  \left(v_0^{(1-\sigma)/2}-c_2^{1-\sigma}\right)^{2}}+c_2^{1-\sigma}\right]^{2/(1-\sigma)}.
\end{gather*}

If $\sigma =1$, then
\begin{gather*}
  F(v)\ge \frac{\sqrt{v_0}}{(\sqrt{v_0}+c_1)c_3}
  \left[\ln v -\ln v_0 +2c_2v^{-1/2}-2c_2v_0^{-1/2}\right].
\end{gather*}
In this case we put
\begin{gather*}
  F_1(v):=\frac{\sqrt{v_0}}{(\sqrt{v_0}+c_1)c_3}
  \left[\ln v -\ln v_0 -2c_2v_0^{-1/2}\right].
\end{gather*}
Then
\begin{gather*}
  F_1^{-1}(z)= v_0\exp \left(\frac{\sqrt{v_0}+c_1}{\sqrt{v_0}}z+2c_2v_0^{-1/2}\right).
\end{gather*}

Approaching the limit as $v_0 \to  c_2^2$, we obtain the following proposition.

\begin{theorem}\label{th:applthbs2}
Let the conditions of the theorem ~\ref{th:applthbs} hold true for
$v_0=(1+\epsilon)c_2^2$ and for all sufficiently small $\epsilon >0$. Then the system
\eqref{eq:nlsys} has a solution $x(t)$ which is extendable on the entire real
axis and which for any $t\in \mathbb R$ satisfies the inequality
\begin{gather*}
  \langle B(t)x(t),x(t)\rangle \le
  \begin{cases}
    \left[C_1\sqrt{\sup_{s\ge t}\lambda^+(s)-\inf_{s\le
  t}\lambda_-(s)}+c_2^{1-\sigma}\right]^{\frac{2}{1-\sigma}}, & \quad \text{ÿêùî}\;\sigma <1, \\
    (ec_2)^2\exp \left[C_2\left(\sup_{s\ge t}\lambda^+(s)-\inf_{s\le
  t}\lambda_-(s)\right)\right], & \quad \text{ÿêùî}\;\sigma =1,
  \end{cases}
\end{gather*}
where $C_1:=\sqrt{(1-\sigma)C_2}$, $C_2:=\frac{(c_1+c_2)c_2c_3}{2}$.
\end{theorem}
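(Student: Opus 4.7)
The plan is to apply Theorem~\ref{th:applthbs} for each sufficiently small $\epsilon>0$ with the specific choice $v_0 := (1+\epsilon)c_2^2$, use Remark~\ref{rem:4} together with the explicit lower bounds $F_1(v)\le F(v)$ already computed above to replace the implicit $F^{-1}$-bound by a closed-form expression in $v_0$, and then pass to the limit $\epsilon\to 0^+$ to extract a single global solution satisfying the limiting inequality.

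Concretely, write $\Delta(t) := \sup_{s\ge t}\lambda^+(s) - \inf_{s\le t}\lambda_-(s)$. For each small $\epsilon$, Theorem~\ref{th:applthbs} produces a V-bounded solution $x_\epsilon(t)$, $t\in\mathbb{R}$, with $V(t,x_\epsilon(t)) \le F^{-1}\bigl(\tfrac{v_0}{2}\Delta(t)\bigr)$, and by Remark~\ref{rem:4} this may be strengthened to $F_1^{-1}\bigl(\tfrac{v_0}{2}\Delta(t)\bigr)$. Substituting $z=v_0\Delta(t)/2$ into the explicit formulas for $F_1^{-1}$ and sending $v_0\to c_2^2$, I track the coefficients as follows. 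For $\sigma<1$, the inner coefficient $(1-\sigma)(\sqrt{v_0}+c_1)c_3\sqrt{v_0}/2$ tends to $(1-\sigma)C_2 = C_1^2$, the residual $(v_0^{(1-\sigma)/2}-c_2^{1-\sigma})^2$ vanishes, and the additive constant $c_2^{1-\sigma}$ is preserved, yielding exactly $\bigl[C_1\sqrt{\Delta(t)}+c_2^{1-\sigma}\bigr]^{2/(1-\sigma)}$. For $\sigma=1$, the exponent $(\sqrt{v_0}+c_1)c_3\sqrt{v_0}\Delta(t)/2 + 2c_2/\sqrt{v_0}$ tends to $C_2\Delta(t)+2$ and the prefactor $v_0$ to $c_2^2$, producing $(ec_2)^2\exp\bigl(C_2\Delta(t)\bigr)$.

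It remains to produce an honest global solution satisfying the limiting bound rather than an $\epsilon$-family. Fix $T>0$. For small $\epsilon$ the uniform estimate on $V(t,x_\epsilon(t))$ on $[-T,T]$, combined with positive definiteness of $V_t$ and compactness of its sublevel sets, confines the graphs of $x_\epsilon|_{[-T,T]}$ to a compact subset $K_T\subset\Omega$; continuity of $f$ then equibounds $\dot x_\epsilon$, and an Arzel\`a--Ascoli argument combined with a standard diagonal procedure across $T=1,2,\dots$ extracts a subsequence $x_{\epsilon_k}$ converging uniformly on compacta to a global solution $x(t)$ of \eqref{eq:nlsys}. The asserted estimate for $x(t)$ follows by pointwise passage to the limit in the $\epsilon$-dependent bound, using continuity of $V$ and continuous dependence of the right-hand side on $v_0$ at $v_0 = c_2^2$. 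The main technical point, and the only place that requires care, is exactly this limit passage: one has to confirm that the $\epsilon$-uniformity of the bound (which in turn rests on continuity of $F_1^{-1}$ in $v_0$) genuinely survives to produce both compactness of initial data and preservation of the final estimate; the rest is algebraic substitution in formulas already established.
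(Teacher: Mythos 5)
Your proposal matches the paper's own (very terse) argument: the paper likewise derives the explicit minorant $F_1$ and its inverse, invokes Theorem~\ref{th:applthbs} together with Remark~\ref{rem:4}, and simply declares that the conclusion follows by ``approaching the limit as $v_0\to c_2^2$''; your coefficient bookkeeping for both cases $\sigma<1$ and $\sigma=1$ is correct, and your Arzel\`a--Ascoli/diagonal extraction of a single limiting solution supplies the detail the paper leaves implicit. One wording slip worth fixing: since $F_1\le F$ implies $F_1^{-1}\ge F^{-1}$, passing from $F^{-1}$ to $F_1^{-1}$ \emph{weakens} (does not strengthen) the estimate, which is precisely why the substitution is legitimate.
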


Lastly, we prove the following uniqueness theorem.

\begin{theorem}\label{th:unbs2} Let for the system~\eqref{eq:nlsys} the
following conditions hold true:

 1) there exists a domain $\hat{\Omega}\subseteq \Omega $ such that
 \begin{gather*}
  f(t,x)-f(t,y)=\hat{A}(t,x,y)(x-y)\quad \forall (t,x),(t,y)\in \hat{\Omega},
\end{gather*}
where $\hat{A}(t,x,y)\in \mathrm{Hom}(\mathbb R^n)$;

2) there exists a smooth family of operators $\{\hat{C}(t)\}_{t\in \mathbb R}$ in $\mathbb R^n$
and a function $\hat{\beta}(\cdot)\in C(\mathbb R \mapsto(0,\infty))$ such that the minimal
characteristic value $\hat{\lambda}(t,x,y)$ of the pencil
$$\hat{C}(t)\hat{A}(t,x,y)+\hat{A}^*(t,x,y)\hat{C}(t)+\frac{d}{dt}\hat{C}(t)-\lambda B(t)$$
satisfies the inequality  $$\hat{\lambda}(t,x,y)\ge \hat{\beta}(t)\quad \forall
(t,x),\;(t,y)\in \hat{\Omega};$$

3) the maximal by absolute value characteristic value $\hat{\Lambda}(t)$   of the
pencil  $\hat{C}(t)-\lambda B(t)$ satisfies the equality
\begin{gather*}
  \limsup_{t\to \pm \infty}
  \frac{1}{\left|\hat{\Lambda}(t)\right|}\left|\int_{0}^{t}\frac{\beta(s)}{\hat{\Lambda}(s)}\,ds\right|=\infty.
\end{gather*}

Then the system \eqref{eq:nlsys} has at most one solution $x(t)$ defined on the
entire axis, with the graph belonging to $\hat{\Omega}$ and such that
 $\sup_{t\in \mathbb R}\langle B(t)x(t),x(t)\rangle <\infty $.
\end{theorem}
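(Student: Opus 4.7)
The statement is a quadratic--form specialization of the abstract Theorem~\ref{th:unbs}, and the natural strategy is to build the auxiliary objects of that theorem explicitly from the data $\hat{C}(t)$, $\hat{A}(t,x,y)$, $\hat{\beta}(t)$, $\hat{\Lambda}(t)$ and then verify that all the hypotheses carry over. Concretely, I would take $\tilde{\Omega}:=\hat{\Omega}$ and set
\[
U(t,z):=\langle \hat{C}(t)z,z\rangle,\quad H(v):=v,\quad h(v):=v,\quad b(t):=|\hat{\Lambda}(t)|,\quad \beta(t):=\hat{\beta}(t).
\]
Then $h$ is nondecreasing, $H$ is strictly increasing, and $b$, $\beta$ are continuous and positive (condition~3 forces $\hat{\Lambda}(t)$ to be generically non-zero; if it vanishes on a small set one can perturb $b(t)$ to $|\hat{\Lambda}(t)|+\varepsilon$ and pass to the limit $\varepsilon\downarrow 0$).

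The upper bound $|U(t,z)|\le b(t)H(V(t,z))$ reduces to
\[
|\langle \hat{C}(t)z,z\rangle|\le|\hat{\Lambda}(t)|\langle B(t)z,z\rangle,
\]
which is the standard Rayleigh-type characterization of the characteristic values of the symmetric pencil $\hat{C}(t)-\lambda B(t)$ against the positive definite form $\langle B(t)\cdot,\cdot\rangle$, exactly of the same type as the estimates already used in Section~\ref{PairQuadForms}. The lower bound on the derivative is the heart of the matter: differentiating $u(t):=U(t,x(t)-y(t))$ along any pair $x(t),y(t)$ of solutions in $\hat{\Omega}$ and using condition~1 of the theorem to rewrite $f(t,x)-f(t,y)=\hat{A}(t,x,y)(x-y)$, one obtains
\[
\dot{u}(t)=\bigl\langle\bigl(\dot{\hat{C}}(t)+\hat{C}(t)\hat{A}+\hat{A}^{*}\hat{C}(t)\bigr)(x-y),\,x-y\bigr\rangle,
\]
and condition~2 bounds the right hand side below by $\hat{\lambda}(t,x,y)\langle B(t)(x-y),x-y\rangle\ge\hat{\beta}(t)V(t,x-y)=\beta(t)h(V(t,x-y))$, as required.

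With these choices $h\circ H^{-1}(u/b(t))=u/|\hat{\Lambda}(t)|$, so the abstract divergence hypothesis of Theorem~\ref{th:unbs} collapses to
\[
\limsup_{t\to\pm\infty}\frac{u}{|\hat{\Lambda}(t)|}\left|\int_{0}^{t}\frac{\hat{\beta}(s)}{|\hat{\Lambda}(s)|}\,ds\right|=\infty\qquad\forall u>0,
\]
which is exactly condition~3 (the outer modulus absorbs any possible sign change of $\hat{\Lambda}$). Finally, if $x(t),y(t)$ are two solutions on $\mathbb{R}$ with graphs in $\hat{\Omega}$ and $\sup_t\langle B(t)x(t),x(t)\rangle$, $\sup_t\langle B(t)y(t),y(t)\rangle$ both finite, then the triangle inequality for the time-dependent norm $\|\cdot\|_{B(t)}$ gives $\sup_{t\in\mathbb{R}}V(t,x(t)-y(t))<\infty$, so Theorem~\ref{th:unbs} applies and forces $x(t)\equiv y(t)$.

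I expect the only real subtlety to be the sign-and-modulus bookkeeping in condition~3 (matching the signed $\hat{\Lambda}(s)$ that appears inside the integral in the statement with the $|\hat{\Lambda}(s)|$ produced by the reduction to Theorem~\ref{th:unbs}), together with the brief regularization needed if $\hat{\Lambda}$ is allowed to vanish; everything else is a direct unwinding of the abstract hypotheses in the quadratic-form setting.
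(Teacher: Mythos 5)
Your proposal is correct and follows essentially the same route as the paper: the paper's proof likewise reduces the statement to Theorem~\ref{th:unbs} by taking $U(t,z)=\langle \hat{C}(t)z,z\rangle$, $V(t,z)=\langle B(t)z,z\rangle$ and invoking the two Rayleigh-quotient inequalities for the pencils $\hat{C}(t)-\lambda B(t)$ and $\hat{C}\hat{A}+\hat{A}^*\hat{C}+\dot{\hat{C}}-\lambda B$. Your version merely makes explicit the choices $H=h=\mathrm{id}$, $b=|\hat{\Lambda}|$, $\beta=\hat{\beta}$ and the triangle-inequality step for the $B(t)$-norm, which the paper leaves implicit.
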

\begin{proof} Having observed that the theorem's conditions ensure the fulfillment
 of the inequalities
\begin{gather*}
|\langle \hat{C}(t)(x-y),x-y\rangle|\le \left|\hat{\Lambda}(t)\right|\langle B(t)(x-y),x-y\rangle,\\
  \langle \big(2\hat{C}(t)\hat{A}(t,x,y)+\tfrac{d}{dt}\hat{C}(t)\big)(x-y),x-y\rangle \ge
  \hat{\beta}(t)\langle B(t)(x-y),x-y\rangle,
\end{gather*}
it is sufficient to apply the theorem~\ref{th:unbs} for the case where
$U(t,x)=\langle \hat{C}(t)x,x\rangle $, $V(t,x)=\langle B(t),x,x\rangle $.
\end{proof}

{\bf Conclusions}.

The technique applied in this paper for studying the essentially nonlinear
nonauto\-nom\-ous systems  by means of a pair of auxiliary functions allows us
to generalize a number of earlier known results concerning the  questions of
existence and uniqueness of bounded and proper solutions. In the case where the
estimating function is a quadratic form with varying matrix the
theorem~\ref{th:applthbs2} can be efficiently applied to establish asymptotic
estimates of solutions for $t\to \infty $.
% ----------------------------------------------------------------

% ----------------------------------------------------------------

\end{document}